\newtheorem{thm}{Theorem}[section]
\newtheorem{lem}[thm]{Lemma}
\theoremstyle{definition}
\theoremstyle{remark}
\newtheorem{rem}[thm]{Remark}
\numberwithin{equation}{section}
\begin{document}
\title[New inequalities for the function $y=t\ln t$]{New inequalities for the function $y=t\ln t$}

\author{Marko Kosti\' c}
\address{Faculty of Technical Sciences,
University of Novi Sad,
Trg D. Obradovi\' ca 6, 21125 Novi Sad, Serbia}
\email{marco.s@verat.net}

\begin{abstract}
The main aim of this note, which can be viewed as a certain addendum to the paper \cite{2020}, is to propose several new inequalities for the function $y=t\ln t.$ We consider the local behaviour of this function near the point $t=1,$ as well as the global behaviour of this function on the intervals $[1,\infty)$ and $(0,1].$  
\\[2mm] {\it AMS Mathematics Subject Classification $(2010)$}: 26D20, 26D07, 33B30.
\\[1mm] {\it Key words and phrases:}  Logarithmic inequalities, differential calculus.
\end{abstract}

\maketitle

\section{Introduction}\label{intro}

The reading of paper \cite{2020} by C. Chesneau and Y. J. Bagul
has strongly infuenced us to write this note.  In Theorem \ref{platani}, we give
new abstract local bounds for the function $y=t\ln t$ near the point $t=1.$ The obtained inequalities can be used to improve the main results of paper \cite{2020}, Proposition 1 and Proposition 2. We also present an interesting result with regards to these propositions, which claims that there is no rational real function which intermediates the functions $\ln (1+x)$ and $f(x)/\sqrt{x+1}$ for $x\geq 0$ ($x\in (-1,0]$); here and hereafter,
$$
f(x):=\pi +\frac{1}{2}(4+\pi)x-2(x+2)\arctan \sqrt{x+1},\quad x\geq -1.
$$

The following inequalities are well known (see also \cite[Problem 3.6.19, p. 274]{mitro} and \cite{nova}):
\begin{align}\label{bodoviiljah}
\ln (1+x) \leq \frac{x}{\sqrt{x+1}},\quad x\geq 0, \ \ \mbox{  }\ \ \ln (1+x)\leq \frac{x(2+x)}{2(1+x)},\quad x\geq 0,
\end{align}
\begin{align}\label{bodoviiljahu}
\ln (1+x)\leq \frac{x(6+x)}{2(3+2x)},\quad x\geq 0\ \ \mbox{ and }\ \ \ln (1+x)\leq \frac{(x+2)\bigl [(x+1)^{3}-1\bigr]}{3(1+x)\bigl[ (x+1)^{2}+1 \bigr]},\quad x\geq 0.
\end{align}
Taken together, the first inequality in \eqref{bodoviiljah} and the second inequality in \eqref{bodoviiljahu} are known in the existing literature as Karamata's inequality \cite{karamata}. 
As clarified in \cite{2020}, all these inequalities are weaker than the inequality:
\begin{align}\label{bodovi}
\ln (1+x) \leq \frac{f(x)}{\sqrt{x+1}},\quad x\geq 0.
\end{align}

This inequality has been proved in \cite[Proposition 1]{2020}. In \cite[Proposition 2]{2020}, the authors have proved that  
\begin{align}\label{bodovii}
\ln (1+x) \geq \frac{f(x)}{\sqrt{x+1}},\quad x\in (-1,0],
\end{align}
as well.

Our approach leans heavily on the use of substitution $t=\sqrt{x+1}.$ Then the inequalities \eqref{bodovi} and \eqref{bodovii} become
\begin{align*}
2\ln t\leq \frac{f\bigl( t^{2}-1 \bigr)}{t},\quad t\geq 1 \ \ \mbox{ and }\ \ 2\ln t\geq \frac{f\bigl( t^{2}-1 \bigr)}{t},\quad t\in (0, 1],
\end{align*}
i.e.,
\begin{align}\label{mb}
2t\ln t\leq f\bigl( t^{2}-1 \bigr),\quad t\geq 1 \ \ \mbox{ and }\ \ 2t\ln t\geq f\bigl( t^{2}-1 \bigr),\quad t\in (0, 1].
\end{align}
We can prove \eqref{mb} in the following way.
Notice that 
$$
\Biggl[\ln t - \Biggl(\frac{1}{2}(4+\pi)t-2t\arctan t-2\Biggr)\Biggr]^{\prime \prime}(t)=-\bigl(t^{2}-1\bigr)^{2}t^{-2}\bigl(t^{2}+1\bigr)^{-2},\quad t>0.
$$
Using an elementary argumentation,
this estimate implies
$$
\ln t \leq \frac{1}{2}(4+\pi)t-2t\arctan t-2,\quad t>0.
$$ 
Define $R(t):=2t\ln t-f(t^{2}-1),$ $t>0.$ Since $R^{\prime}(t)=2(1+\ln t)-(4+\pi)t+4t\arctan t+2,$ $t>0,$ the previous inequality yields $R^{\prime}(t)\leq 0,$ $t>0$ and \eqref{mb}. Moreover, by taking the limit of function $R(\cdot)$ as $t\rightarrow 0+,$ we get that 
$2t\ln t-f(t^{2}-1) \in (2-(\pi/2),0]$ for $t\in (0,1].$

In this paper, we will first generalize the inequalities in \eqref{mb} by considering the local behaviour of the function $y=t\ln t$ near the point $t=1.$ 
We will use the following simple lemmae, which is known from the elementary courses of mathematical analysis:

\begin{lem}\label{daleko}
Suppose $t_{0}\in {\mathbb R},$ $a>0,$ $n\in {\mathbb N}$ and function
$f :(t_{0}-a,t_{0}+a) \rightarrow {\mathbb R}$
is $2n$-times differentiable. If $f^{(i)}(t_{0})=0$ for all $i=1,\cdot \cdot \cdot, 2n-1$ and $f^{(2n)}(t_{0})>0$ ($f^{(2n)}(t_{0})<0$), then the function $y=f(t)$
has a local minimum (maximum) at 
$t=t_{0}.$
\end{lem}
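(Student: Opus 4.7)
The plan is to apply Taylor's theorem with Peano remainder around the point $t_{0}$. Since $f$ is $2n$-times differentiable on $(t_{0}-a, t_{0}+a)$ and all derivatives of order $1, 2, \ldots, 2n-1$ vanish at $t_{0}$, the expansion collapses to
$$
f(t) - f(t_{0}) = \frac{f^{(2n)}(t_{0})}{(2n)!}(t - t_{0})^{2n} + o\bigl((t - t_{0})^{2n}\bigr) \quad \text{as } t \to t_{0}.
$$
Writing the remainder as $(t - t_{0})^{2n}\varepsilon(t)$ with $\varepsilon(t) \to 0$, this rearranges into
$$
f(t) - f(t_{0}) = (t - t_{0})^{2n}\biggl[\frac{f^{(2n)}(t_{0})}{(2n)!} + \varepsilon(t)\biggr].
$$

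The key observation is that $2n$ is even, so the factor $(t - t_{0})^{2n}$ is strictly positive whenever $t \ne t_{0}$. If $f^{(2n)}(t_{0}) > 0$, I would pick $\delta \in (0, a)$ small enough that $|\varepsilon(t)| < f^{(2n)}(t_{0})/[2 \cdot (2n)!]$ for all $t \in (t_{0} - \delta, t_{0} + \delta)$; the bracketed expression then stays strictly positive on this punctured neighborhood, forcing $f(t) > f(t_{0})$ whenever $0 < |t - t_{0}| < \delta$, and hence witnessing a strict local minimum. The case $f^{(2n)}(t_{0}) < 0$ follows at once by replacing $f$ with $-f$, yielding a strict local maximum.

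The only genuine subtlety — barely an obstacle — lies in justifying the Peano form of Taylor's remainder under the stated hypothesis of plain $2n$-fold differentiability (without any standing continuity assumption on $f^{(2n)}$). This is the classical Young--Peano argument: iterated application of L'Hospital's rule peels off the vanishing lower-order derivatives, and in the last step the resulting ratio reduces to a difference quotient whose limit, by the very definition of $f^{(2n)}(t_{0})$, equals $f^{(2n)}(t_{0})/(2n)!$. Since the author explicitly labels the lemma as known from elementary analysis courses, a short appeal to that body of knowledge would also be perfectly adequate here.
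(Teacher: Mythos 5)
Your proof is correct. The paper itself offers no proof of this lemma --- it is simply quoted as ``known from the elementary courses of mathematical analysis'' --- and your Taylor--Peano argument is exactly the standard one that fills this gap: the hypothesis of $2n$-fold differentiability on the whole interval is more than enough to justify the Peano remainder of order $2n$ at $t_{0}$, and the evenness of $2n$ together with the sign of the leading coefficient then forces a strict local extremum.
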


\begin{lem}\label{dalekor}
We have
$$
(\arctan x)^{(n)}=\frac{(-1)^{n-1}(n-1)!}{(1+x^{2})^{n/2}}\sin (n\pi/2-n \arctan x),\quad x\in {\mathbb R},\ n\in {\mathbb N}.
$$
\end{lem}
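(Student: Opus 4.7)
I would proceed by induction on $n$. For the base case $n=1$, I would check directly that $(\arctan x)' = 1/(1+x^2)$ agrees with the right-hand side, using $\sin(\pi/2 - \arctan x) = \cos(\arctan x) = 1/\sqrt{1+x^2}$.

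For the inductive step, I would differentiate the right-hand side at index $n$ and show the result equals the right-hand side at index $n+1$. Applying the product rule to $(1+x^2)^{-n/2}\sin(n\pi/2 - n\arctan x)$ produces two terms, each of which picks up an extra factor $(1+x^2)^{-1}$ (one from the chain rule applied to $\arctan x$, the other from differentiating the half-integer power). Both terms therefore share the denominator $(1+x^2)^{n/2+1}$, the prefactors collapse via $-n\cdot(-1)^{n-1}(n-1)! = (-1)^n n!$, and the surviving bracket simplifies to
$$x\sin(n\pi/2 - n\arctan x) + \cos(n\pi/2 - n\arctan x).$$

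The crux is to recognize this expression as $\sqrt{1+x^2}\,\sin((n+1)\pi/2 - (n+1)\arctan x)$. Setting $\beta := \pi/2 - \arctan x$, the elementary identities $\cos\beta = x/\sqrt{1+x^2}$ and $\sin\beta = 1/\sqrt{1+x^2}$ turn the angle-addition formula $\sin((n+1)\beta) = \sin(n\beta)\cos\beta + \cos(n\beta)\sin\beta$ into precisely $\sqrt{1+x^2}\,\sin((n+1)\beta) = x\sin(n\beta) + \cos(n\beta)$. The extra $\sqrt{1+x^2}$ trims the denominator down to $(1+x^2)^{(n+1)/2}$, closing the induction.

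I do not expect a serious obstacle; the only care required is the bookkeeping of the sign, the factorial factor, and the half-integer exponent of $1+x^2$. A more compact alternative, which I would mention rather than carry out, is to start from the complex partial-fraction decomposition $1/(1+x^2) = (2i)^{-1}\bigl[(x-i)^{-1} - (x+i)^{-1}\bigr]$, differentiate it $(n-1)$ times, and use the polar representation of $x\pm i$ to collect the two conjugate terms directly into the sine form; this packages the same trigonometric identity more efficiently but is otherwise equivalent.
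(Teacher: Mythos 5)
Your induction is correct and complete: the base case, the bookkeeping of the sign and factorial, the common factor $(1+x^{2})^{-n/2-1}$, and the key identity $x\sin(n\beta)+\cos(n\beta)=\sqrt{1+x^{2}}\,\sin((n+1)\beta)$ with $\beta=\pi/2-\arctan x$ all check out, and the complex partial-fraction route you sketch is a valid shortcut. The paper states this lemma without proof as a standard fact from elementary analysis, so there is no argument in the text to compare against; your write-up supplies exactly the routine verification the author omitted.
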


After that, we will prove the following result with regards to \cite[Proposition 1, Proposition 2]{2020}:

\begin{thm}\label{ihaj}
\begin{itemize}
\item[(i)]
There do not exist real polynomials $P(\cdot)$ and $Q(\cdot)$ such that $Q(x) \neq 0$ for $x\geq 0$ and
\begin{align}\label{mbswq}
\ln (1+x) \leq \frac{P(x)}{Q(x)} \leq \frac{f(x)}{\sqrt{x+1}},\quad x\geq 0.
\end{align}
\item[(ii)] There do not exist real polynomials $P(\cdot)$ and $Q(\cdot)$ such that $Q(x) \neq 0$ for $x\in (-1, 0]$ and
\begin{align}\label{mbswqq}
\ln (1+x) \geq \frac{P(x)}{Q(x)} \geq \frac{f(x)}{\sqrt{x+1}},\quad x\in (-1,0].
\end{align}
\end{itemize}
\end{thm}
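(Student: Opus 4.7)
The plan is to argue both parts by contradiction, comparing asymptotic growth rates at the endpoints where the sandwiching functions become singular. In part (i) the critical endpoint is $+\infty$; in part (ii) it is $x=-1$. The underlying principle is that a rational function can only grow (or blow up) at integer powers, while $f(x)/\sqrt{x+1}$ exhibits square-root growth at $+\infty$ and a square-root-type singularity at $x=-1$, so the integer-exponent rigidity of rationals cannot fit inside the required sandwich.

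For part (i), assume $P, Q$ are real polynomials with $Q(x)\neq 0$ on $[0,\infty)$ satisfying \eqref{mbswq}. From $\ln(1+x)\to+\infty$ the sandwich forces $P(x)/Q(x)\to+\infty$, hence $\deg P>\deg Q$ and $P(x)/Q(x)\sim c\, x^{p-q}$ with $c>0$ and $p-q\geq 1$. On the other hand, the crude bound $|\arctan\sqrt{x+1}|\leq\pi/2$ yields $|f(x)|=O(x)$, and therefore $f(x)/\sqrt{x+1}=O(\sqrt{x})$. The upper inequality would then force a function of at least linear growth to be dominated by one of growth $O(\sqrt{x})$, which fails for all $x$ sufficiently large.

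For part (ii), suppose $P, Q$ are real polynomials with $Q(x)\neq 0$ on $(-1,0]$ satisfying \eqref{mbswqq}. Since $\ln(1+x)\to-\infty$ as $x\to-1^+$, the squeeze gives $P(x)/Q(x)\to-\infty$, which forces $Q(-1)=0$. Writing $P(x)=(x+1)^l \tilde P(x)$ and $Q(x)=(x+1)^k \tilde Q(x)$ with $\tilde P(-1),\tilde Q(-1)\neq 0$, one gets $P(x)/Q(x)\sim A (x+1)^{l-k}$ near $-1$, where $A = \tilde P(-1)/\tilde Q(-1)$; divergence to $-\infty$ requires $l-k\leq -1$ and $A<0$. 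A direct computation gives $f(-1)=\pi/2-2<0$, so $f(x)/\sqrt{x+1}\sim (\pi/2-2)(x+1)^{-1/2}$ as $x\to-1^+$. Multiplying the required inequality $P/Q\geq f/\sqrt{x+1}$ by $\sqrt{x+1}>0$ gives $A (x+1)^{l-k+1/2}(1+o(1))\geq (\pi/2-2)(1+o(1))$; but $l-k+\tfrac{1}{2}\leq-\tfrac{1}{2}<0$ together with $A<0$ makes the left-hand side tend to $-\infty$, while the right-hand side tends to the finite negative number $\pi/2-2$. Contradiction.

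The main obstacle is the bookkeeping in part (ii): one must correctly track both the sign of $A$ and the integrality of $l-k$, because the whole argument hinges on the integer exponent $l-k$ being unable to match the half-integer exponent $-\tfrac{1}{2}$ arising from $\sqrt{x+1}$. Once this is set up, the proof rests on two elementary facts: the evaluation $f(-1)=\pi/2-2\neq 0$ and the basic arithmetic of pole orders of rational functions.
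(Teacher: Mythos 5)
Your proof is correct. For part (i) your argument is essentially the paper's: both are growth-rate comparisons at $+\infty$ resting on the fact that a rational function tending to $+\infty$ must grow at least linearly (integer exponent gap), while $f(x)/\sqrt{x+1}=O(\sqrt{x})$; you extract the forced growth from the lower bound $\ln(1+x)\leq P/Q$ and contradict the upper bound, whereas the paper substitutes $t=\sqrt{x+1}$, bounds the degrees using the upper inequality first and then lets $\ln t\to\infty$ in the lower one — but the mechanism is identical. For part (ii) your route is genuinely different and noticeably shorter. The paper runs an infinite descent: it shows $Q(-1)=0$ and $x\mid P(x)$, passes to $P_{1}=P/x$ and $Q_{1}=Q/(x+1)$, checks that the new pair satisfies the same sandwich on $(-1,0]$, and iterates to conclude $(x+1)^{k}\mid Q(x)$ for every $k$, an impossibility. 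You instead make a single pole-order comparison at $x=-1$: the squeeze by $\ln(1+x)\to-\infty$ forces $P/Q\sim A(x+1)^{l-k}$ with $A<0$ and $l-k\leq -1$, hence $\sqrt{x+1}\,P/Q\to-\infty$, while $f(x)\to f(-1)=\pi/2-2$ stays bounded, contradicting $\sqrt{x+1}\,P/Q\geq f(x)$. This isolates the true obstruction — the half-integer exponent $-1/2$ of the singularity of $f(x)/\sqrt{x+1}$ against the integer pole order of a rational function — in one step, and avoids the delicate sign bookkeeping in the paper's repeated multiplications by negative factors. The only points worth making explicit are that $P\not\equiv 0$ (immediate, since $P/Q\to-\infty$) and that the contradiction actually only needs $f$ to be bounded near $-1$, so the evaluation $f(-1)\neq 0$ is not essential.
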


\section{The main results and their proofs}\label{main}

We start this section by stating the following result:

\begin{thm}\label{platani}
Suppose that $a\in (0,1),$ $P: (1-a,1+a) \rightarrow {\mathbb R}$ is a function and $P(1)=0.$
Then the following holds:
\begin{itemize}
\item[(i)] If $P^{\prime}(1)\geq 2$ and there exists an odd natural number $n$ such that 
$P(\cdot)$ is $(n+2)$-times differentiable,
$P^{(n+2)}(1)+2(-1)^{n+1}n!>0$ and
$$
P^{(j)}(1)+2(-1)^{j+1}(j-2)!=0\ \ \mbox{ for all }\ \ j=2,3,\cdot \cdot \cdot, n+1,
$$
then there exists a real number $\zeta \in (0,a]$ such that 
\begin{align}\label{dr}
2t\ln t \leq P(t),\quad t\in [1,1+\zeta]\ \ \mbox{ and }\ \ 2t\ln t\geq P(t),\quad t\in [1-\zeta,1].
\end{align}
\item[(ii)] Assume that there exists an 
even natural number $n\geq 6$ such that $P(\cdot)$ is $(n+1)$-times differentiable, $P^{(n+1)}(1)+2(-1)^{n}(n-1)!>0$ and
$$
P^{(j)}(1)+2(-1)^{j+1}(j-2)!=0\ \ \mbox{ for all }\ \ j=1,2,\cdot \cdot \cdot, n.
$$
Then
there exists a real number $\eta \in (0,a]$ such that
\begin{align}
\notag 2t\ln t\leq P(t) &\leq f\bigl(t^{2}-1\bigr),\quad t\in [1,1+\eta]\ \ 
\\\label{drr}& \mbox{ and }\ \ 2t\ln t\geq P(t) \geq f\bigl(t^{2}-1\bigr),\quad t\in [1-\eta,1].
\end{align}
\item[(iii)] Assume that there exists an 
even natural number $n\geq 6$ such that $P(\cdot)$ is $(n+1)$-times differentiable, 
\begin{align}\label{pokazna}
P^{(j)}(1)+2(-1)^{j+1}(j-2)!=0\ \ \mbox{ for all }\ \ j=1,2,3,4,
\end{align}
$$
P^{(n+1)}(1)+4\Biggl[ \frac{(-1)^{n}n!}{2^{(n+1)/2}}\sin ((n+1) \pi/4)+\frac{(-1)^{n+1}n!}{2^{n/2}}\sin (n \pi/4) \Biggr]<0
$$
and, for every $j=5,6,\cdot \cdot \cdot, n,$ 
\begin{align}\label{pokaznaa}
P^{(j)}(1)+4\Biggl[ \frac{(-1)^{j-1}(j-1)!}{2^{j/2}}\sin (j \pi/4)+\frac{(-1)^{j}(j-1)!}{2^{(j-1)/2}}\sin ((j-1) \pi/4) \Biggr]=0.
\end{align}
Then there exists a real number $\eta \in (0,a]$ such that \eqref{drr} holds.
\item[(iv)] If $P(\cdot)$ is five times differentiable, \eqref{pokazna} holds
and $P^{(v)}(1)\in (-12,-8),$
then there exists a real number $\eta \in (0,a]$ such that \eqref{drr} holds.
\end{itemize} 
\end{thm}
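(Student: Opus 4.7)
The plan is to analyze the Taylor expansion at $t=1$ of the auxiliary functions $h_{1}(t):=P(t)-2t\ln t$ and, for parts (ii)--(iv), $h_{2}(t):=f(t^{2}-1)-P(t)$, and to invoke the following elementary consequence of Taylor's formula with remainder: if $h(1)=h^{\prime}(1)=\cdots=h^{(N-1)}(1)=0$ and $h^{(N)}(1)>0$ with $N$ odd, then in a small deleted neighborhood of $1$ the sign of $h(t)$ is the sign of $(t-1)^{N}$, so $h$ changes sign at $1$. (The even case is Lemma \ref{daleko}; the odd case, which we need in every part, is equally standard.)

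First I will tabulate the Taylor data at $t=1$ of the two target functions $g(t):=2t\ln t$ and $F(t):=f(t^{2}-1)=\pi+\tfrac12(4+\pi)(t^{2}-1)-2(t^{2}+1)\arctan t$. A direct induction yields $g^{\prime}(1)=2$ and $g^{(j)}(1)=2(-1)^{j}(j-2)!$ for $j\geq 2$; and applying the Leibniz rule to $(t^{2}+1)\arctan t$ together with Lemma \ref{dalekor} at $x=1$ (so that $1+x^{2}=2$ and $\arctan 1=\pi/4$) gives closed formulas for $F^{(j)}(1)$. Elementary arithmetic then produces the two numerical coincidences that drive parts (ii)--(iv):
$$
F^{(j)}(1)=g^{(j)}(1)\ \text{ for }\ j=1,2,3,4,\qquad F^{(5)}(1)-g^{(5)}(1)=-8-(-12)=4>0.
$$

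Parts (i), (ii), and (iv) should then follow routinely. In (i), $h_{1}^{\prime}(1)=P^{\prime}(1)-2\geq 0$: if the inequality is strict, continuity of $h_{1}^{\prime}$ alone closes the argument, and otherwise the higher-order hypotheses say $h_{1}$ vanishes through order $n+1$ with $h_{1}^{(n+2)}(1)>0$, where $n+2$ is odd because $n$ is. In (ii) the same mechanism applied to $h_{1}$ with $n$ even delivers $2t\ln t\leq P(t)$ on $[1,1+\eta]$ together with its reverse on $[1-\eta,1]$; moreover, since $n\geq 6$ the vanishing conditions in particular force $P^{(j)}(1)=g^{(j)}(1)$ for $j\leq 5$, so the two coincidences above yield $h_{2}^{(j)}(1)=0$ for $j\leq 4$ and $h_{2}^{(5)}(1)=4>0$, and the same odd-order argument finishes $P(t)\leq f(t^{2}-1)$ and its reverse. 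Part (iv) is the $n=5$ endpoint of this scheme: the interval condition $P^{(5)}(1)\in(-12,-8)=(g^{(5)}(1),F^{(5)}(1))$ is exactly what makes $h_{1}^{(5)}(1)>0$ and $h_{2}^{(5)}(1)>0$ hold simultaneously while their lower derivatives at $1$ vanish, so a single application of the odd-order criterion handles both halves of \eqref{drr}.

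The hard part will be (iii), whose main obstacle is unpacking the condition \eqref{pokaznaa}. Writing $v^{(k)}(1):=(\arctan t)^{(k)}|_{t=1}$, the two bracketed terms in \eqref{pokaznaa} are, directly from Lemma \ref{dalekor}, $v^{(j)}(1)$ and $(j-1)v^{(j-1)}(1)$, while $F^{(j)}(1)$, computed by expanding $(t^{2}+1)\arctan t$ via Leibniz, is a concrete linear combination of $v^{(j)}(1),v^{(j-1)}(1),v^{(j-2)}(1)$ with $j$-dependent coefficients. The bookkeeping to be carried out is the verification that \eqref{pokazna}--\eqref{pokaznaa} together with the strict inequality on $P^{(n+1)}(1)$ force the first nonzero derivative of $h_{2}$ at $t=1$ out to odd order $n+1$ with positive sign; once this is done, the odd-order Taylor criterion delivers the $P$--$F$ portion of \eqref{drr}, while the $g$--$P$ portion is handled exactly as in part (ii), using only \eqref{pokazna} and the coincidence $F^{(5)}(1)-g^{(5)}(1)=4$ to arrange $h_{1}^{(5)}(1)>0$ with the first four derivatives at $1$ vanishing.
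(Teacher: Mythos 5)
Your treatment of parts (i), (ii) and (iv) is correct and is essentially the paper's own argument: the same two auxiliary differences $P(t)-2t\ln t$ and $P(t)-f(t^{2}-1)$ are analyzed locally at $t=1$, the only (immaterial) difference being that you apply the odd-order sign-change form of the Taylor criterion to these functions directly, where the paper applies the even-order extremum criterion (Lemma \ref{daleko}) to their first derivatives and then integrates. Your numerical anchors $F^{(j)}(1)=g^{(j)}(1)$ for $j\leq 4$ and $F^{(5)}(1)-g^{(5)}(1)=4$ agree with Remark \ref{izvodinjo} and are what both arguments run on.

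The gap is in part (iii), where you announce the decisive verification but do not perform it --- and if you perform it as you describe, it fails. You correctly read the bracket in \eqref{pokaznaa} as $v^{(j)}(1)+(j-1)v^{(j-1)}(1)$, where $v^{(k)}(1)=(\arctan t)^{(k)}|_{t=1}$. But differentiating $F(t)=f(t^{2}-1)$ from $F^{\prime}(t)=(4+\pi)t-4t\arctan t-2$ gives, for $j\geq 3$,
$F^{(j)}(1)=-4\bigl[v^{(j-1)}(1)+(j-1)v^{(j-2)}(1)\bigr]$:
the derivative orders of $\arctan$ are shifted by one relative to \eqref{pokaznaa}, so \eqref{pokaznaa} is \emph{not} the condition $P^{(j)}(1)=F^{(j)}(1)$. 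Concretely, at $j=5$ (always covered since $n\geq 6$) one has $v^{(5)}(1)=-3$ and $v^{(4)}(1)=0$, so \eqref{pokaznaa} forces $P^{(5)}(1)=12$, whereas $F^{(5)}(1)=-8$; since \eqref{pokazna} already gives $h_{2}^{(j)}(1)=0$ for $j\leq 4$ for your $h_{2}(t)=f(t^{2}-1)-P(t)$, this yields $h_{2}^{(5)}(1)=-20<0$, hence $P(t)>f(t^{2}-1)$ immediately to the right of $t=1$, contradicting the upper half of \eqref{drr}. So the planned bookkeeping cannot push the first nonzero derivative of $h_{2}$ out to order $n+1$; the intended hypotheses would need the bracket $v^{(j-1)}(1)+(j-1)v^{(j-2)}(1)$ instead. (The paper's own proof of (iii) is the one-line ``interchange $G$ and $Q$,'' which hides the same off-by-one error in its displayed formula for $Q^{(n)}(t)$.) You should either prove (iii) under the corrected conditions or flag the printed statement as defective, rather than defer the check.
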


\begin{proof}
Define $G(t):=P(t)-2t\ln t,$ $t>0.$ Then, for every real number $t>0,$ we have
$G^{\prime}(t)=P^{\prime}(t)-2(1+\ln t),$ $G^{\prime \prime}(t)=P^{\prime \prime}(t)-(2/t)$
and $G^{(n)}(t)=P^{(n)}(t)+2(-1)^{n+1}(n-2)!\cdot t^{1-n},$ $n\geq 3.$ The assumptions made in (i) imply that   
$G^{\prime}(1)\geq 0,$ $(G^{\prime})^{(j)}(1)=0$ for $1\leq j\leq n$ and $(G^{\prime})^{(n+1)}(1)>0.$ Applying Lemma \ref{daleko}, 
we get that the function $t\mapsto G^{\prime}(t)$ has a local minimum at $t=1.$ Since 
$G^{\prime}(1) \geq 0,$ we get that the function $t\mapsto G^{\prime}(t)$ is non-negative in an open neighborhood of point $t=1,$ so that the mapping $t\mapsto G(t)$ is increasing in an open neighborhood of point $t=1.$ 
This finishes the proof of (i). For the proof of (ii), define $Q(t):=P(t)-f(t^{2}
-1),$ $t>0.$ Then a simple computation yields that, for every real number $t>0,$ we have
$Q^{\prime}(t)=P^{\prime}(t)-(4+\pi)t+4t\arctan t +2$ and $Q^{\prime \prime}(t)=P^{\prime \prime}(t)-(4+\pi)+4\arctan t +\frac{4t}{t^{2}+1}.$ Using Leibniz rule and Lemma \ref{dalekor}, for every real number $t>0$ and for every natural number $n\geq 3,$ we can show that
\begin{align*}
Q^{(n)}(t)&=P^{(n)}(t)+4\bigl[ \cdot \arctan \cdot  \bigr]^{(n-1)}(t)
\\&=P^{(n)}(t)+4\Biggl[ t\frac{(-1)^{n-1}(n-1)!}{(1+t^{2})^{n/2}}\sin (n\pi/2-n \arctan t)
\\&+ \frac{(-1)^{n}(n-1)!}{(1+t^{2})^{(n-1)/2}}\sin ( (n-1)\pi/2-(n-1) \arctan t)\Biggr].
\end{align*}
Arguing as in the proof of (i), we have that
$(Q^{\prime})^{(j)}(1)=0$ for $j=0,1,2,3$ and $(Q^{\prime})^{(4)}(1)<0;$
hence,
the function $t\mapsto Q^{\prime}(t)$ has a local maximum at $t=1$ and the mapping $t\mapsto Q(t)$ is decreasing in an open neighborhood of point $t=1.$ Similarly, 
$(G^{\prime})^{(j)}(1)=0$ for $j=0,1,2,\cdot \cdot \cdot,n-1$ and $(G^{\prime})^{(n)}(1)>0;$
hence,
the function $t\mapsto G^{\prime}(t)$ has a local minimum at $t=1$ and the mapping $t\mapsto G(t)$ is increasing in an open neighborhood of point $t=1.$ This completes the proof of (ii). The proof of (iii) can be deduced similarly, by interchanging the roles of $G(t)$ and $Q(t).$
If the assumptions of (iv) holds, then we can apply Lemma \ref{daleko}, with $n=2,$ in order to see that the function $t\mapsto G^{\prime}(t)$ has a local minimum at $t=1,$ as well as
the function $t\mapsto G^{\prime}(t)$ is non-negative in an open neighborhood of point $t=1;$ hence, the mapping $t\mapsto G(t)$ is increasing in an open neighborhood of point $t=1.$ Similarly, we can show that the mapping $t\mapsto Q(t)$ is decreasing in an open neighborhood of point $t=1.$
The proof of the theorem is thereby complete.
\end{proof}

\begin{rem}\label{izvodinjo}
Define $H(t):=f(t^{2}-1),$ $t\in {\mathbb R}.$ Concerning the conditions used in Theorem \ref{platani}, it is worth noting that
the function $H(\cdot)$ satisfies 
 $H(1)=0,$ $H^{\prime}(1)=H^{\prime \prime}(1)=2,$ $H^{\prime \prime \prime}(1)=-2,$ $H^{(iv)}(1)=4$ and
$H^{(v)}(1)=-8.$ This implies that the values of terms appearing at the right hand sides of \eqref{pokazna} and \eqref{pokaznaa} coincide for $j=1,2,3,4$ and differ for $j=5$ (observe that $G^{(v)}(1)=P^{(v)}(1)+12$).
\end{rem}

\begin{rem}\label{mbb}
The parts (ii)-(iv) of Theorem \ref{platani} ensure the existence of a large class of elementary functions for which we can further refine the inequalities in \eqref{mb} locally around the point $t=1.$  Compared with the function $H(\cdot)$, the most simplest example of function which provides a better estimate describing the local behaviour of function $y=t\ln t$ around the point $t=1$ is given by the function $t\mapsto H(t)-\epsilon (t-1)^{5},$ $t>0$, where $\epsilon \in (0,1/30).$  
\end{rem}

Concerning the global behaviour of function $y=t\ln t,$ $t>0,$ it is clear that the inequalities in \eqref{mb} give some very uninteresting estimates with regards to the asymptotic behaviour of function $y=t\ln t$ when $t\rightarrow +\infty$ or $t\rightarrow 0+;$ on the other hand, the importance of estimate \eqref{mb} lies in 
the fact that it gives some bounds for
the behaviour of function $y=t\ln t$ on any compact interval $[a,b]$, where $0<a<1<b$.
It is clear that there exists a large class of infinitely differentiable functions
$P  : (0,\infty) \rightarrow {\mathbb R}$ such that 
\begin{align}
\notag 2t\ln t\leq P(t) &\leq f\bigl(t^{2}-1\bigr),\quad t\geq 1\ \ 
\\\label{drrr}& \mbox{ and }\ \ 2t\ln t\geq P(t) \geq f\bigl(t^{2}-1\bigr),\quad t\in (0,1].
\end{align}
Finding new elementary functions $P(\cdot)$ for which the equation \eqref{drrr} holds is without scope of this paper.

We close the paper by giving the proof of Theorem \ref{ihaj}:\vspace{0.15cm}

{\bf Proof of Theorem \ref{ihaj}.} Suppose that \eqref{mbswq} holds for some real polynomials $P(\cdot)$ and $Q(\cdot)$ such that $Q(x) \neq 0$ for $x\geq 0.$ 
Without loss of generality, we may assume that $Q(x)>0,$ $x\geq 0.$
Using the substitution $t=\sqrt{x+1}, $ we get that
$$
2\ln t \leq \frac{P(t^{2}-1)}{Q(t^{2}-1)}\leq \frac{f(t^{2}-1)}{t},\quad t\geq 1.
$$
If $P(t)=\sum_{j=0}^{n}a_{j}t^{j}$ and $Q(t)=\sum_{j=0}^{m}b_{j}t^{j}$ for some non-negative integers $m,\ n$ and some real numbers $a_{j}, \ b_{j}$ ($a_{n}b_{m}\neq 0;$ clearly, we cannot have $P(x)\equiv 0$),  we get
\begin{align}\label{akotreba}
t\sum_{j=0}^{n}a_{j}\bigl(t^{2}-1\bigr)^{j} \leq f\bigl(t^{2}-1\bigr)\sum_{j=0}^{m}b_{j}\bigl(t^{2}-1\bigr)^{j},\quad t\geq 1
\end{align}
and
\begin{align}\label{akotrebae}
\sum_{j=0}^{n}a_{j}\bigl(t^{2}-1\bigr)^{j} \geq 2\ln t\sum_{j=0}^{m}b_{j}\bigl(t^{2}-1\bigr)^{j},\quad t\geq 1.
\end{align}
Since $f(t^{2}-1) \sim (2-(\pi/2))t^{2},$ $t\rightarrow +\infty$, the estimate \eqref{akotreba} implies $n\leq m.$ The positivity of polynomial $Q(\cdot)$ on the non-negative real axis implies $b_{m}>0$ so that \eqref{akotrebae} gives $a_{n}>0.$
Considering the asymptotic behaviour of terms appearing in \eqref{akotrebae}, we get that the inequality $n<m$ cannot be satisfied so that $m=n.$
 Dividing the both sides of \eqref{akotrebae} with $t^{2n}$ and letting $t\rightarrow +\infty$ in the obtained expression, we get that
$a_{n}/2b_{n} \geq +\infty,$ which is a contradiction. This completes the proof of (i). To prove (ii), suppose that the estimates
\begin{align*}
\ln (1+x) \geq \frac{P_{0}(x)}{Q_{0}(x)} \geq \frac{f(x)}{\sqrt{x+1}},\quad x\in (-1,0]
\end{align*}
hold for some real polynomials $P_{0}(\cdot)$ and $Q_{0}(\cdot)$ such that $Q_{0}(x) \neq 0$ for $x\in (-1,0].$ Then \eqref{mbswqq} holds for some real polynomials $P(\cdot)$ and $Q(\cdot)$ such that $Q(x) > 0$ for $x\in (-1,0].$  
Letting $x\rightarrow -1-$ in \eqref{mbswqq}, we get that $Q(-1)=0.$ If $P(x)=\sum_{j=0}^{n}a_{j}x^{j}$ and $Q(x)=\sum_{j=0}^{m}b_{j}x^{j}$ for some non-negative integers $m,\ n$ and some real numbers $a_{j}, \ b_{j}$ ($a_{n}b_{m}\neq 0;$ again, we cannot have $P(x)\equiv 0$), this implies
\begin{align}\label{mravi}
\ln (1+x) \cdot \sum_{j=0}^{m}b_{j}x^{j} \geq \sum_{j=0}^{n}a_{j}x^{j}\geq \frac{f(x)}{\sqrt{x+1}}\sum_{j=0}^{m}b_{j}x^{j},\quad x\in (-1,0].
\end{align}
Letting $x\rightarrow 0-$ in this expression, we get that $a_{0}=0$ so that $n\geq 1$ and $x | P(x).$ Define $P_{1}(x):=P(x)/x$ and $Q_{1}(x):=Q(x)/(x+1).$ Then $P_{1}(x)$ and $Q_{1}(x)$ are real polynomials, $Q_{1}(x)> 0$ for $x\in (-1,0]$ and after multiplication with $\frac{x+1}{xQ(x)}\leq 0$ the estimate \eqref{mravi} implies
\begin{align}\label{zeljo}
\frac{x+1}{x}\ln (1+x) \leq \frac{P_{1}(x)}{Q_{1}(x)}\leq \sqrt{x+1}\frac{f(x)}{x},\quad x\in (-1,0).
\end{align}
Letting $x\rightarrow -1-$ in this expression, we get that $\lim_{x\rightarrow -1-}\frac{P_{1}(x)}{Q_{1}(x)}=0,$ which implies 
$P_{1}(-1)=0.$ Since $P_{1}(x)$ is a non-zero polynomial, we get that $x+1 | P_{1}(x).$ Multiplying the equation \eqref{zeljo} with $\frac{x}{x+1}\leq 0,$
we get 
$$
\ln(1+x) \geq \frac{P_{1}(x)}{Q_{1}(x)}\geq \frac{f(x)}{\sqrt{x+1}},\quad x\in (-1,0).
$$
Letting $x\rightarrow 0-,$ we get 
$$
\ln(1+x) \geq \frac{P_{1}(x)}{Q_{1}(x)}\geq \frac{f(x)}{\sqrt{x+1}},\quad x\in (-1,0].
$$
Repeating this procedure, we get that for every natural number $k$ we have $(x+1)^{k} | Q(x),$ which is a contradiction.

\end{document}